\documentclass[preprint,12pt]{elsarticle}
\usepackage{amsmath,amssymb,amsthm,mathtools}
\usepackage{array}

\usepackage{graphicx}
\usepackage{hyperref}
\hypersetup{
	colorlinks=true,
	linkcolor=cyan,
	citecolor=cyan,
}
\usepackage{exscale}
\usepackage{algorithm,algpseudocode}

\newtheorem{theorem}{Theorem}[section]
\newtheorem{lemma}{Lemma}[section]
\newtheorem{con}{Conjecture}[section]
\newtheorem{cor}{Corollary}[section]
\newtheorem{rk}{Remark}[section]
\newtheorem{de}{Definition}[section]
\newtheorem{prop}{Proposition}[section]

\begin{document}
	\sloppy
	\begin{frontmatter}
		\title{Primitive Polynomials of the Form $g(x)+\lambda$ over Finite Fields: Non-Existence Results and Conjectures}
		  \author[]{Avnish K. Sharma}
		  \ead{avkush94@gmail.com}
		\cortext[cor1]{Corresponding author}
		\address{Department of Mathematics, Shri Ram College of Commerce, University of Delhi, New Delhi-110007, India}

\begin{abstract}
        In this paper, we investigate the existence of primitive polynomials over $\mathbb{F}_{q^n}$ whose constant term is a primitive element of $\mathbb{F}_{q^n}$. We prove that such polynomials do not exist if $q$ is odd, $q^n\equiv3\pmod{4}$, and the degree $m$ of the polynomial is odd. In particular, the polynomials $f(x)=g(x)+\lambda$, where $g(x)\in\mathbb{F}_q[x]$ satisfies $g(0)=0$ and $\lambda\in\mathbb{F}_{q^n}$ is primitive, cannot be primitive under the same conditions. Further, for the cubic polynomial $x^3+x^2+x+\lambda$, we establish non-existence results in characteristics $2$ and $3$. These results, in particular, provide counterexamples to previously proposed existence conjectures.

        We also study the family $x^p+x+\lambda$ over $\mathbb{F}_{p^n}$. For an odd prime $p$, we prove that, provided $\sum_{i=0}^{n-1}(-1)^i\lambda^{p^i}\neq0$, the polynomial $x^p+x+\lambda$ is irreducible over $\mathbb{F}_{p^n}$ if and only if $n$ is even. Motivated by this result and supported by computational evidence, we formulate conjectures concerning the existence of such primitive polynomials, including the stronger assertion that $x^p+x+\lambda$ is primitive for every primitive $\lambda\in\mathbb{F}_{p^2}$.
\end{abstract}
		
\begin{keyword}
	Finite Fields\sep Primitive Elements\sep Primitive Polynomials \sep Discriminant of a Polynomial.
			
    \MSC[$2020$] 11T06 \sep 12E20.
\end{keyword}	

\end{frontmatter}

\section{Introduction}\label{S1}

       Let $q$ be a prime power and $n$ be a positive integer. The symbol $\mathbb{F}_{q}$ denotes a finite field with $q$ elements, and the symbol $\mathbb{F}_{q^n}$ denotes the extension of $\mathbb{F}_{q}$ of degree $n$. An element $\alpha\in\mathbb{F}_{q^n}$ is called a primitive element if its multiplicative order is $q^n-1$. A monic polynomial $f(x)\in\mathbb{F}_{q^n}[x]$ of degree $m$ is called primitive if it is the minimal polynomial of a primitive element of $\mathbb{F}_{q^{mn}}$ over $\mathbb{F}_{q^n}$. Readers may refer to \cite{Nieder} for more details on this topic.
       
       Primitive polynomials play a fundamental role in finite field theory and have various applications in cryptography and coding theory. In particular, they are extensively used in pseudo-random number generation, including Linear Feedback Shift Register (LFSR)-based stream ciphers, as they produce sequences of maximal period.

       In \cite{TSR}, Ambrish Awasthi and Rajendra K. Sharma investigated a generalization of Linear Feedback Shift Registers, known as Transformation Shift Registers (TSRs). They showed that the existence of primitive TSRs is equivalent to the existence of a certain class of primitive polynomials over finite fields. Consequently, the problem of constructing primitive TSRs reduces to that of establishing the existence of these special primitive polynomials. They proposed the following conjectures.
  
    \begin{con}\upshape{\cite[Conjecture 4.1]{TSR}}\label{Con1.1}
        There exists a primitive polynomial $f(x)$ of degree $m$ over $\mathbb{F}_{q^n}$ of the following form $f(x)=g(x)+\lambda, \text{ for every prime }q, \text{ and } m,n\geq 2,$ where $g(x)\in\mathbb{F}_{q}[x]$ such that $g(0)=0$ and $\lambda$ is a primitive element in $\mathbb{F}_{q^n}$.
    \end{con}

    In particular, by taking $g(x)=x^3+x^2+x$ and $n=2$, they further proposed another conjecture.
    
    \begin{con}\upshape{\cite[Conjecture 9.1]{TSR}}\label{Con1.2}
        There exist primitive polynomials which have the form $x^3 + x^2 + x + \lambda$ over $\mathbb{F}_{q^2}$ for every prime $q$, where $\lambda$ is a primitive element in $\mathbb{F}_{q^2}$.
    \end{con}

    In this paper, we extend the non-existence analysis from odd prime $q$ to odd prime power $q$. We establish a non-existence result for primitive polynomials $f(x)$ of odd degree $m$ with primitive constant term over $\mathbb{F}_{q^n}$ for $q^n\equiv 3\pmod 4$. In particular, we prove that there are no primitive polynomials of the form $f(x)=g(x)+\lambda$ of odd degree $m$ over $\mathbb{F}_{q^n}$, where $g(x)\in\mathbb{F}_{q}[x]$ with $g(0)=0$ and $\lambda$ is a primitive element of $\mathbb{F}_{q^n}$, if $q^n\equiv 3\pmod 4$. This result, in particular, disproves Conjecture \ref{Con1.1} for every odd prime $q\equiv 3\pmod{4}$, by taking any odd $n\geq 3$ and any odd $m\geq 3$. Further, we establish another non-existence result for cubic polynomials of the form $x^3+x^2+x+\lambda$ over fields of characteristic $3$ as well as over fields of characteristic $2$. In particular, we disprove Conjecture \ref{Con1.2} for $q=3$ and $n=2$, and for $q=2$ and for any odd $n\geq 3$. Motivated by these non-existence results and computational evidence, we propose conjectures regarding the existence of primitive polynomials $f(x)=g(x)+\lambda$ and $x^3+x^2+x+\lambda$ over $\mathbb{F}_{q^n}$. Finally, we extend our study to the polynomials of the form $x^p+x+\lambda$ over $\mathbb{F}_{p^n}$, where $p$ is a prime. Using a result established by S.D. Cohen \cite[Theorem 1]{cohen1990primitive}, we prove that, if $p=2$, such primitive polynomials always exists for every $n\geq 1$. For odd primes $p$, we prove that if $\lambda\in\mathbb{F}_{p^n}$ satisfies the condition $\sum_{i=0}^{n-1}(-1)^i\lambda^{p^i}\neq 0$ and $n$ is even, then the polynomial $x^p+x+\lambda$ is always irreducible. In fact, we conjecture that, if $n$ is even, then there exists a primitive element $\lambda\in\mathbb{F}_{p^n}$ satisfying this condition such that the polynomial $x^p+x+\lambda$ is primitive over $\mathbb{F}_{p^n}$. In particular, we take $n=2$ and propose a stronger conjecture that for every odd prime $p$ and for every primitive $\lambda\in\mathbb{F}_{p^2}$, the polynomial $x^p+x+\lambda$ is primitive over $\mathbb{F}_{p^2}$. 

\section{A general non-existence result and existence conjectures for primitive polynomials $f(x)=g(x)+\lambda$}\label{S2}    

    In this section, we prove that there does not exist any primitive polynomial of the form $f(x)=g(x)+\lambda$, where $g(x)\in\mathbb{F}_q[x]$, $g(0)=0$ and $\lambda$ is a primitive element of $\mathbb{F}_{q^n}$, where $q$ is a prime power. We also propose two existence conjectures for such primitive polynomials. Before proving this result, we recall the following lemma.

    \begin{lemma}{\upshape\cite[Theorem 3.18]{Nieder}}\label{L2.1}
        Let $q$ be any prime power and let $f(x)\in\mathbb{F}_{q}[x]$ be a primitive polynomial of degree $m\geq 1$ over $\mathbb{F}_{q}$. Then $(-1)^mf(0)$ is a primitive element of $\mathbb{F}_{q}$.
    \end{lemma}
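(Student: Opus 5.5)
We prove Lemma \ref{L2.1} through the norm map. Let $f(x)\in\mathbb{F}_q[x]$ be primitive of degree $m$. By definition, $f$ is the minimal polynomial over $\mathbb{F}_q$ of some primitive element $\alpha\in\mathbb{F}_{q^m}$. Since $f$ is irreducible of degree $m$ and has the root $\alpha$ in $\mathbb{F}_{q^m}$, its roots are exactly the $m$ distinct Galois conjugates $\alpha,\alpha^q,\dots,\alpha^{q^{m-1}}$. Hence
\[
f(x)=\prod_{i=0}^{m-1}\bigl(x-\alpha^{q^i}\bigr).
\]

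Setting $x=0$ gives
\[
f(0)=(-1)^m\prod_{i=0}^{m-1}\alpha^{q^i}=(-1)^m\alpha^{1+q+\cdots+q^{m-1}}=(-1)^m\alpha^{(q^m-1)/(q-1)}.
\]
Since $(-1)^{2m}=1$, we get $(-1)^mf(0)=\alpha^{(q^m-1)/(q-1)}$. This element is the norm $N_{\mathbb{F}_{q^m}/\mathbb{F}_q}(\alpha)$, so it lies in $\mathbb{F}_q$. Directly, its $(q-1)$-th power is $\alpha^{q^m-1}=1$, so it is a root of $x^{q}-x$.

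It remains to compute the order of $\beta:=\alpha^{(q^m-1)/(q-1)}$. Put $d=(q^m-1)/(q-1)$. In the cyclic group $\langle\alpha\rangle$ of order $q^m-1$, the element $\alpha^d$ has order $(q^m-1)/\gcd(d,q^m-1)$. Because $d$ divides $q^m-1$, this gcd equals $d$, and so the order is $(q^m-1)/d=q-1$. Therefore $\beta$ generates $\mathbb{F}_q^*$, which means $(-1)^mf(0)$ is a primitive element of $\mathbb{F}_q$.

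None of these steps is a real obstacle. The only point needing care is the factorization of $f$ into the Frobenius conjugates of $\alpha$. This follows from the standard fact that an irreducible polynomial over $\mathbb{F}_q$ with a root $\alpha$ has the roots $\alpha^{q^i}$, and that these are distinct for $0\le i<m$ because $\alpha$ has degree $m$ over $\mathbb{F}_q$.
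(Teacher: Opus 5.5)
Your proof is correct and matches the standard argument. The paper does not prove this lemma itself but cites \cite[Theorem 3.18]{Nieder}, whose proof is exactly your computation $(-1)^m f(0)=\prod_{i=0}^{m-1}\alpha^{q^i}=\alpha^{(q^m-1)/(q-1)}=N_{\mathbb{F}_{q^m}/\mathbb{F}_q}(\alpha)$, an element of order $q-1$ when $\alpha$ is primitive in $\mathbb{F}_{q^m}$; the paper also relies on this norm viewpoint later, when it identifies the constant term of a primitive quadratic over $\mathbb{F}_{2^n}$ with its norm.
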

    
    We now state our main result of this section.

    \begin{theorem}\label{T2.1}
        Let $q$ be an odd prime power and let $m,n$ be positive integers. If $q^n\equiv3\pmod{4}$ and $m$ is odd, then there does not exist a primitive polynomial $f(x)$ of degree $m$ over $\mathbb{F}_{q^n}$ with primitive constant term.
    \end{theorem}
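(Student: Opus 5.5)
We argue by contradiction, applying Lemma \ref{L2.1} over the base field $\mathbb{F}_{q^n}$ (it holds for any prime power, in particular $q^n$). Suppose $f(x)\in\mathbb{F}_{q^n}[x]$ is primitive of odd degree $m$ and its constant term $f(0)$ is a primitive element of $\mathbb{F}_{q^n}$. Since $m$ is odd, Lemma \ref{L2.1} says $(-1)^m f(0)=-f(0)$ is also a primitive element of $\mathbb{F}_{q^n}$. So the task reduces to showing that, when $q^n\equiv 3\pmod 4$, an element $\alpha$ and its negative $-\alpha$ cannot both be primitive in $\mathbb{F}_{q^n}$.

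For this we use quadratic residuosity in the cyclic group $\mathbb{F}_{q^n}^*$ of even order $q^n-1$ ($q$ odd). A primitive element generates $\mathbb{F}_{q^n}^*$, so it is a non-square: if $\alpha=\beta^2$, the order of $\alpha$ would divide $(q^n-1)/2$. Hence both $\alpha=f(0)$ and $-\alpha$ would be non-squares. Their quotient $-1=(-\alpha)/\alpha$ would then be a square, since the non-squares form a single coset of the index-$2$ subgroup of squares. But $-1$ is a square in $\mathbb{F}_{q^n}$ if and only if $4\mid q^n-1$. Indeed, $-1$ is the unique element of order $2$, and it is a square exactly when $\mathbb{F}_{q^n}^*$ has an element of order $4$. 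This fails when $q^n\equiv 3\pmod 4$, which gives the contradiction.

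Equivalently, we can compute with the Legendre character $\eta$ of $\mathbb{F}_{q^n}$. We have $\eta(-\alpha)=\eta(-1)\eta(\alpha)=(-1)^{(q^n-1)/2}\eta(\alpha)=-\eta(\alpha)$, because $(q^n-1)/2$ is odd. So $\alpha$ and $-\alpha$ have opposite quadratic character and cannot both be non-squares.

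No step here is a real obstacle. The only point to state carefully is that Lemma \ref{L2.1} is applied with $\mathbb{F}_{q^n}$ as the ground field, so that $q$ in the lemma is replaced by $q^n$. The quadratic character facts are standard, and we will cite them from \cite{Nieder} or verify them in one line as above.
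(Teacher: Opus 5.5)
Your proposal is correct and follows essentially the same route as the paper: you apply Lemma \ref{L2.1} over $\mathbb{F}_{q^n}$ to get that $-\lambda$ is primitive, then reach a contradiction by quadratic residuosity. The only cosmetic difference is that the paper exhibits $-\lambda$ directly as a square, $-\lambda=\lambda^{(q^n+1)/2}=(\lambda^{(q^n+1)/4})^2$, using $\lambda^{(q^n-1)/2}=-1$, whereas you express the same fact through $\eta(-1)=-1$ and the multiplicativity of the quadratic character.
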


    \begin{proof}
        Suppose, to the contrary, that there exists a primitive polynomial $f(x)$ of degree $m$ over $\mathbb{F}_{q^n}$ such that the constant term $f(0)=\lambda$ is a primitive element of $\mathbb{F}_{q^n}$. By Lemma \ref{L2.1}, $(-1)^m\lambda=-\lambda$ is a primitive element of $\mathbb{F}_{q^n}$. Since $q$ is odd and $\lambda$ is primitive in $\mathbb{F}_{q^n}$, we have $-1=\lambda^{(q^n-1)/2}$, and consequently $-\lambda=\lambda^{(q^n+1)/2}$. Now, since $q^n\equiv 3\pmod{4}$, $(q^n+1)/2$ is even. Writing $(q^n+1)/2=2t$, we obtain $-\lambda=\lambda^{2t}=(\lambda^t)^2$. Thus, $-\lambda$ is a square in $\mathbb{F}_{q^n}$. However, no primitive element of $\mathbb{F}_{q^n}$ can be a square, and hence we arrive at a contradiction. Therefore, no such primitive polynomial $f(x)$ exists over $\mathbb{F}_{q^n}$.
    \end{proof}

        It is important to note that {\upshape Theorem \ref{T2.1}} does not require $f(x)$ to be of any special form. It only requires that the constant term of $f(x)$ be primitive and that the degree $m$ be odd. Hence, if the polynomial is of the form $f(x)=g(x)+\lambda$, where $g(x)\in\mathbb{F}_q[x]$ and $g(0)=0$, the proof remains valid. Therefore, in particular, we have the following result.

    \begin{cor}\label{Cor2.1}
        Let $q$ be an odd prime power and let $m,n\geq1$. If $q^n\equiv3\pmod{4}$ and $m$ is odd, then there does not exist a primitive polynomial $f(x)=g(x)+\lambda$ of degree $m$ over $\mathbb{F}_{q^n}$, where $g(x)\in\mathbb{F}_q[x]$, $g(0)=0$ and $\lambda$ is a primitive element of $\mathbb{F}_{q^n}$.
    \end{cor}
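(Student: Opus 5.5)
The plan is to deduce Corollary \ref{Cor2.1} directly from Theorem \ref{T2.1}, by checking that a polynomial of the special form lies inside the class the theorem already excludes. Fix an odd prime power $q$, an odd integer $m$, and $n\geq 1$ with $q^n\equiv 3\pmod 4$. Suppose, for contradiction, that $f(x)=g(x)+\lambda$ is a primitive polynomial of degree $m$ over $\mathbb{F}_{q^n}$, where $g(x)\in\mathbb{F}_q[x]$, $g(0)=0$, and $\lambda\in\mathbb{F}_{q^n}$ is primitive.

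The first step is to identify the constant term. Since $g(0)=0$, we get $f(0)=g(0)+\lambda=\lambda$, so the constant term of $f$ is exactly the primitive element $\lambda$. The second step is to place $f$ in the setting of the theorem. Because $\mathbb{F}_q\subseteq\mathbb{F}_{q^n}$, we have $f(x)\in\mathbb{F}_{q^n}[x]$. By assumption, $f$ is monic (as primitive polynomials are), of odd degree $m$, and primitive over $\mathbb{F}_{q^n}$. Hence $f$ is a primitive polynomial of odd degree over $\mathbb{F}_{q^n}$ with primitive constant term, and $q^n\equiv 3\pmod 4$. Theorem \ref{T2.1} says no such polynomial exists, which is a contradiction.

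There is essentially no obstacle here. The only point to check is that the hypotheses of Theorem \ref{T2.1} place no restriction on the coefficients other than the constant term. They do not: the argument in the theorem uses only Lemma \ref{L2.1} applied to $f(0)=\lambda$, together with the fact that $-\lambda=\bigl(\lambda^{(q^n+1)/4}\bigr)^2$ is a square in $\mathbb{F}_{q^n}$. The extra structure $g(x)\in\mathbb{F}_q[x]$ is therefore irrelevant, and the corollary follows immediately.
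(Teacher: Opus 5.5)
Your proposal is correct and follows the paper's own route. The paper likewise observes that Theorem~\ref{T2.1} only uses that the constant term $f(0)=\lambda$ is primitive and that $m$ is odd, so a polynomial $g(x)+\lambda$ with $g(0)=0$ falls directly under it; your identity $-\lambda=\bigl(\lambda^{(q^n+1)/4}\bigr)^2$ is the same square argument used in the theorem's proof.
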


        We note that in the above result, we assume $q$ to be an odd prime power. However, Conjecture \ref{Con1.1} assumes only that $q$ is a prime. In particular, {\upshape Conjecture \ref{Con1.1}} of Ambrish Awasthi and Rajendra K. Sharma is false for any odd prime $q$ such that $q^n\equiv 3 \pmod 4$ and for every odd integer $m\geq 3$.

    \begin{cor}\label{Cor2.2}
        Let $q$ be an odd prime and let $n$ be a positive integer. If $q^n\equiv3\pmod{4}$ and $m\geq 3$ is odd, then there does not exist a primitive polynomial $f(x)=g(x)+\lambda$ of degree $m$ over $\mathbb{F}_{q^n}$, where $g(0)=0$ and $\lambda$ is a primitive element of $\mathbb{F}_{q^n}$.
    \end{cor}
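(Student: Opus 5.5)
Corollary \ref{Cor2.2} is a direct specialization of Theorem \ref{T2.1}, so I will simply check that its hypotheses place us inside that theorem. An odd prime $q$ is in particular an odd prime power. The conditions $q^n\equiv 3\pmod 4$ and $m$ odd are assumed verbatim. So Theorem \ref{T2.1} rules out every primitive polynomial of degree $m$ over $\mathbb{F}_{q^n}$ whose constant term is primitive.

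It remains to see that any $f(x)=g(x)+\lambda$ as in the statement has constant term $f(0)=g(0)+\lambda=\lambda$. By hypothesis $\lambda$ is a primitive element of $\mathbb{F}_{q^n}$, so $f$ would be a primitive polynomial of odd degree with primitive constant term, contradicting Theorem \ref{T2.1}. This is the same reasoning as for Corollary \ref{Cor2.1}.

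The statement as written does not require $g(x)\in\mathbb{F}_q[x]$. This causes no difficulty, because the argument only uses $f(0)=\lambda$ and never the coefficients of $g$. Likewise, the restriction $m\geq 3$ merely matches the range $m,n\geq 2$ of Conjecture \ref{Con1.1} and is not needed for the argument. There is no real obstacle. The only point to record is that the key step inside Theorem \ref{T2.1} still works here: since $(q^n+1)/2$ is even, $-\lambda=\lambda^{(q^n+1)/2}$ is a square and hence not primitive, whereas Lemma \ref{L2.1} forces $(-1)^m\lambda=-\lambda$ to be primitive.
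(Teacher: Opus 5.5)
Your proposal is correct and is the same argument the paper relies on: Corollary \ref{Cor2.2} is a direct specialization of Theorem \ref{T2.1}, since an odd prime is an odd prime power and $f(0)=g(0)+\lambda=\lambda$ is primitive. Your remarks that neither $g(x)\in\mathbb{F}_q[x]$ nor $m\geq 3$ is needed match the paper's own note that Theorem \ref{T2.1} places no restriction on the form of $f$.
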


    \begin{rk}\label{R2.1}
         We provide computational verification of {\upshape Corollary \ref{Cor2.2}} through {\upshape Algorithm \ref{Algo1}} for certain values of $(q,m,n)$. We have found that for each triplet $(q,m,n)$ listed in {\upshape Table \ref{table1}}, there is no primitive polynomial $f(x)=g(x)+\lambda$ with the desired properties. These computations were performed using {\upshape SageMath (Version 9.0) \cite{sagemath}}.

    \end{rk}

    \begin{algorithm}[h!]
    \caption{Exhaustive Search for Primitive Polynomials $f(x)=g(x)+\lambda$ over $\mathbb{F}_{q^n}$ with Primitive $\lambda\in\mathbb{F}_{q^n}$}
    \label{Algo1}
    \begin{algorithmic}[1]
        \Require $q,m,n$
        \Ensure The number of primitive polynomials
        $f(x)=g(x)+\lambda$ of degree $m$
        over $\mathbb{F}_{q^n}$ with $\lambda$ primitive in $\mathbb{F}_{q^n}$ and $g(x)\in\mathbb{F}_q[x]$ with $g(0)=0$

        \State Construct $F=\mathbb{F}_{q}$, $K=\mathbb{F}_{q^n}$, and the polynomial rings $F[x]$ and $K[x]$
        \State Set $\texttt{primitive}=0$

        \For{$\lambda\in K$}
            \If{$\mathrm{ord}(\lambda)=q^n-1$}
                \For{each monic $g(x)\in F[x]$ of degree $m$ with $g(0)=0$}
                    \State Define
                    $f(x)=g(x)+\lambda$
                    \If{$f(x)$ is primitive over $K$}
                        \State $\texttt{primitive}\gets\texttt{primitive}+1$
                    \EndIf
                \EndFor
            \EndIf
        \EndFor
        \State \textbf{return} $\texttt{primitive}$
    \end{algorithmic}
\end{algorithm}

    \begin{table}[h!]
        \centering
        \caption{Computational verification of Corollary \ref{Cor2.2} for several choices of $(q,m,n)$}
        \label{table1}
        \begin{tabular}{c|>{\centering\arraybackslash}p{4cm}|>{\centering\arraybackslash}p{5cm}}
        \hline
        $(q,m,n)$& \# Polynomials Tested& \# Primitive Polynomials Found\\
        \hline
        $(3,3,3)$ & $9 \times 12$ & 0\\
        $(3,5,3)$ & $81 \times 12$ & 0\\
        $(7,3,3)$ & $49 \times 108$ & 0\\
        $(7,5,3)$ & $2401 \times 108$ & 0\\
        $(19,3,3)$ & $361 \times 2268$ & 0\\
        $(23,3,3)$ & $529 \times 4680$ & 0\\
        $(31,3,3)$ & $961 \times 7920$ & 0\\
        \hline
        \end{tabular}
    \end{table}

    \begin{rk}\label{R2.2}
        The computations suggest that if either $m$ is even or $q^n\not\equiv3 \pmod 4$, then the desired primitive polynomials $f(x)=g(x)+\lambda$ may exist. For computational verification of this observation, we apply {\upshape Algorithm \ref{Algo1}} for several triplets $(q,m,n)$ for which either $m$ is even or $q^n\not\equiv3 \pmod 4$ and found that there exist primitive polynomials $f(x)=g(x)+\lambda$ with the desired properties. The results are tabulated in {\upshape Table \ref{table2}}.
    \end{rk}

    \begin{table}[h!]
        \centering
        \caption{Number of primitive polynomials $f(x)=g(x)+\lambda$ over $\mathbb{F}_{q^n}$ with desired properties for several triplets $(q,m,n)$ when either $m$ is even or $q^n\not\equiv3 \pmod 4$}
        \label{table2}
        \begin{tabular}{c|>{\centering\arraybackslash}p{4cm}|>{\centering\arraybackslash}p{5cm}}
        \hline
        $(q,m,n)$& \# Polynomials Tested& \# Primitive Polynomials Found\\
        \hline
        $(3,3,2)$ & $9 \times 4$ & 12\\
        $(3,4,3)$ & $27 \times 12$ & 54\\
        $(3,3,4)$ & $9 \times 32$ & 72\\
        $(5,2,2)$ & $5 \times 8$ & 16\\
        $(5,3,1)$ & $25 \times 2$ & 20\\
        $(5,3,2)$ & $25 \times 8$ & 72\\
        $(5,3,4)$ & $25 \times 192$ & 1408\\
        \hline
        \end{tabular}
    \end{table}

    \begin{rk}\label{R2.3}
        We also note that, in {\upshape Theorem \ref{T2.1}}, $q$ is assumed to be an odd prime power. However, if we assume $q=2^k,\, k\geq 1$, then the computations suggest that there exists a primitive polynomial of the form $f(x)=g(x)+\lambda$ of degree $m\geq 1$ with the desired properties. The computational verification for this observation is presented in {\upshape Table \ref{table3}} for several triplets $(q,m,n)$ using {\upshape Algorithm \ref{Algo1}}.
    \end{rk}

    Motivated by Theorem \ref{T2.1} and by the computational verifications in Remarks \ref{R2.2} and \ref{R2.3}, we propose the following conjectures.

    \begin{con}\label{Con2.1}
        Let $q$ be an odd prime power and let $m,n\geq 1$. If $m$ is even or $q^n\not\equiv 3\pmod{4}$, then there exists a primitive polynomial $f(x)$ of degree $m$ over $\mathbb{F}_{q^n}$ of the form $f(x)=g(x)+\lambda,$ where $g(x)\in\mathbb{F}_q[x]$ satisfies $g(0)=0$, and $\lambda$ is a primitive element of $\mathbb{F}_{q^n}$.
    \end{con}

    \begin{con}\label{Con2.2}
        Let $q=2^k,\, k\geq 1$, and let $m,n\geq 1$. Then there exists a primitive polynomial $f(x)$ of degree $m$ over $\mathbb{F}_{q^n}$ of the form $f(x)=g(x)+\lambda,$ where $g(x)\in\mathbb{F}_q[x]$ satisfies $g(0)=0$, and $\lambda$ is a primitive element of $\mathbb{F}_{q^n}$.
    \end{con}

    \begin{table}[h!]
        \centering
        \caption{The number of primitive polynomials
$f(x)=g(x)+\lambda$ over $\mathbb{F}_{q^n}$ for $q=2^k$}
        \label{table3}
        \begin{tabular}{c|>{\centering\arraybackslash}p{4cm}|>{\centering\arraybackslash}p{5cm}}
        \hline
        $(q,m,n)$& \# Polynomials Tested& \# Primitive Polynomials Found\\
        \hline
        $(2,2,2)$ & 4 & 2\\
        $(2,2,3)$ & 12 & 3\\
        $(2,3,2)$ & 8 & 2\\
        $(2,3,3)$ & 24 & 6\\
        $(2,4,2)$ & 16 & 4\\
        $(2,4,3)$ & 48 & 12\\
        $(2,5,2)$ & 32 & 10\\
        $(2,5,3)$ & 96 & 12\\
        $(4,5,3)$ & 9216 & 1682 \\
        $(8,3,2)$ & 2304 & 864\\
        $(16,2,3)$ & 27648 & 11808\\
        \hline
        \end{tabular}
    \end{table}   

\section{Non-existence results and existence conjectures for the cubic polynomial $x^3+x^2+x+\lambda$ }\label{S3}

    In this section, we establish the non-existence of a primitive cubic polynomial in the fields of characteristic $2$ and $3$. In particular, we prove that Conjecture \ref{Con1.2} is false, if $q=3$ and $n=2$, or if $q=2$ and $n$ is odd. We divide these results into two subsections.

\subsection{When the fields are of characteristic $3$}

    In this subsection, we establish the following general non-existence result for the cubic polynomial $x^3+x^2+x+\lambda$ in the field of characteristic $3$.

    \begin{theorem}\label{T3.1}
        Let $q=3^k,\, k\geq 1$. Then for any $n\geq 1$, the polynomial $f(x)=x^3+x^2+x+\lambda$ is not primitive over $\mathbb{F}_{q^n}$, where $\lambda$ is a primitive element of  $\mathbb{F}_{q^n}$.  
    \end{theorem}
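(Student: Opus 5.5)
The plan is to use the discriminant of $f(x)=x^3+x^2+x+\lambda$ together with Lemma \ref{L2.1}, which produces a square/non-square contradiction in the same spirit as the proof of Theorem \ref{T2.1}.

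\textbf{Step 1: compute the discriminant.} For a monic cubic $x^3+ax^2+bx+c$ the discriminant is
\[
D=a^2b^2-4b^3-4a^3c-27c^2+18abc .
\]
With $a=b=1$ and $c=\lambda$ this becomes $D=1-4-4\lambda-27\lambda^2+18\lambda$. In characteristic $3$ we have $1-4=-3=0$, $27=18=0$ and $4=1$, so $D=-\lambda$. Since $\lambda\neq0$, also $D\neq 0$, and hence $f$ is separable.

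\textbf{Step 2: irreducibility forces $D$ to be a square.} Suppose $f$ is primitive over $\mathbb{F}_{q^n}$. Then it is irreducible, and its roots $\alpha,\alpha^{Q},\alpha^{Q^2}$ (with $Q=q^n$) are distinct. Put $\delta=\prod_{i<j}(\alpha_i-\alpha_j)$, so that $\delta^2=D$. The Frobenius $x\mapsto x^{Q}$ permutes the roots cyclically. A $3$-cycle is an even permutation, so it fixes $\delta$. Hence $\delta\in\mathbb{F}_{Q}$, and $D=-\lambda$ is a square in $\mathbb{F}_{q^n}$.

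This is the characteristic-$3$ instance of Stickelberger's observation. It is valid because the field has odd characteristic: $\delta$ and $-\delta$ are distinct, so the sign of the permutation really is detected by the action on $\delta$.

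\textbf{Step 3: derive the contradiction.} By Lemma \ref{L2.1} with $m=3$, the element $(-1)^3f(0)=-\lambda$ is a primitive element of $\mathbb{F}_{q^n}$. A primitive element of a field of odd order is never a square, since squares have order dividing $(q^n-1)/2$. This contradicts Step 2, so $f$ is not primitive.

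The only delicate point is the discriminant computation in Step 1, where it is easy to slip on reductions modulo $3$. I would double-check it by the substitution $x=y+1$. This gives $f(y+1)=y^3+y^2+\lambda$, whose discriminant is $-4\cdot1^3\cdot\lambda-27\lambda^2=-\lambda$ in characteristic $3$, in agreement with Step 1.
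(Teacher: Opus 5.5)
Your proof is correct and follows the paper's route: compute $D(f)=-\lambda$ in characteristic $3$, use that an irreducible cubic over a finite field has square discriminant (the paper's Proposition~\ref{P3.1}, proved by the same Frobenius/$3$-cycle argument), and contradict Lemma~\ref{L2.1}, which makes $-\lambda$ primitive and hence a non-square. One small aside on your Step 2 remark: a $3$-cycle fixes $\delta$ in every characteristic, so odd characteristic is used only in Step 3, where a primitive element must be a non-square.
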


   To establish this result, we first recall the following definition of the discriminant of a polynomial and a proposition concerning its properties.

    \begin{de}\upshape{\cite[Definition 2.1.135]{mullenhandbook}}\label{D3.1}
    Let $f$ be a polynomial of degree $m$ in $\mathbb{F}_q[x]$ with leading coefficient $a$, and with roots $\alpha_1,\alpha_2,\ldots,\alpha_m$ in its splitting field, counted with multiplicity. The discriminant of $f$ is given by
    $$D(f)=a^{2m-2}\prod_{1\leq i<j\leq m}(\alpha_i-\alpha_j)^2.$$
\end{de}

\begin{prop}\label{P3.1}
    Let $\mathbb{F}_{q}$ be a finite field, and let $f(x)\in \mathbb{F}_{q}[x]$ be an irreducible cubic polynomial. Then the discriminant $D(f)$ is a square in $\mathbb{F}_{q}$.
\end{prop}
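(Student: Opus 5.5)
We will use Galois theory: the Frobenius permutes the roots cyclically, and the square root of the discriminant is the Vandermonde-type product, which changes only by the sign of that permutation. Let $f(x)\in\mathbb{F}_q[x]$ be an irreducible cubic with leading coefficient $a$. We may assume $a=1$, since the factor $a^{2m-2}=a^4=(a^2)^2$ in Definition \ref{D3.1} is already a square in $\mathbb{F}_q$ and does not affect the conclusion. Since finite fields are perfect, $f$ is separable. Hence its roots in the splitting field $\mathbb{F}_{q^3}$ are distinct, and we may write them as $\alpha,\alpha^q,\alpha^{q^2}$ for any root $\alpha$.

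Set
$$\delta=(\alpha_1-\alpha_2)(\alpha_1-\alpha_3)(\alpha_2-\alpha_3),$$
with $\alpha_1=\alpha$, $\alpha_2=\alpha^q$ and $\alpha_3=\alpha^{q^2}$. Then $D(f)=\delta^2$ and $\delta\neq0$. The key step is to apply the Frobenius $\sigma(z)=z^q$, which is a field automorphism of $\mathbb{F}_{q^3}$ fixing exactly $\mathbb{F}_q$. It sends $\alpha_1\mapsto\alpha_2$, $\alpha_2\mapsto\alpha_3$ and $\alpha_3\mapsto\alpha_1$, so
$$\sigma(\delta)=(\alpha_2-\alpha_3)(\alpha_2-\alpha_1)(\alpha_3-\alpha_1).$$
Comparing factor by factor with $\delta$, two of the three factors have their sign reversed. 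Hence $\sigma(\delta)=(-1)^2\delta=\delta$. This sign count is the only real content of the argument and reflects the fact that a $3$-cycle is an even permutation.

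It follows that $\delta$ is fixed by $\sigma$, so $\delta\in\mathbb{F}_q$, and therefore $D(f)=\delta^2$ is a square in $\mathbb{F}_q$. The argument works in every characteristic. In characteristic $2$, signs do not matter, and the conclusion is immediate once we know that $\delta\in\mathbb{F}_q$.

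The only point requiring care is ensuring $\delta\ne0$, which is needed to apply the conclusion meaningfully later, for instance to obtain a nonzero square. This follows from separability, as noted above.
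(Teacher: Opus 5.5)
Your proof is correct and is essentially the paper's argument: the product $\delta$ of root differences is fixed by the Frobenius $z\mapsto z^q$, so $\delta\in\mathbb{F}_q$ and $D(f)=\delta^2$ is a square. There are two cosmetic differences. The paper orders the factors cyclically as $(\beta-\beta^q)(\beta^q-\beta^{q^2})(\beta^{q^2}-\beta)$, so Frobenius merely permutes them and no sign count is needed. You, in addition, account for the leading-coefficient factor $a^4=(a^2)^2$, which the paper tacitly ignores.
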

\begin{proof}
    Let $\beta$ be a root of a cubic irreducible polynomial $f(x)$. Since $f(x)$ is irreducible of degree $3$, its roots are $\beta,\beta^q,\beta^{q^2}$. Then the element 
    $$\delta=(\beta-\beta^{q})(\beta^{q}-\beta^{q^2})(\beta^{{q^2}}-\beta)$$
    is fixed under the Frobenius automorphism $\sigma:x\mapsto x^q$, that is, 
    $$\sigma(\delta)=(\beta^{q}-\beta^{{q^2}})(\beta^{{q^2}}-\beta)(\beta-\beta^{q})=\delta.$$ 
    Thus, $\delta\in \mathbb{F}_{q}$. On the other hand, by the definition of the discriminant, $$D(f)=(\beta-\beta^{q})^2(\beta-\beta^{{q^2}})^2(\beta^{{q}}-\beta^{q^2})^2=\delta^2.$$ Since $\delta\in \mathbb{F}_{q}$, it follows that $D(f)$ is a square in $\mathbb{F}_{q}$.
\end{proof}

     Now, we prove Theorem \ref{T3.1}.
    \begin{proof}[\textbf{Proof of \upshape{Theorem \ref{T3.1}}:}]
        We first note that no primitive element $\lambda\in\mathbb{F}_{q^n}$ can be a square in $\mathbb{F}_{q^n}$. To see this, suppose that $\lambda=\beta^2$ for some $\beta\in\mathbb{F}_{q^n}$. Then $\lambda^{(q^n-1)/2}=\beta^{q^n-1}=1$, which contradicts the fact that $\lambda$ generates the cyclic group $\mathbb{F}_{q^n}^{*}$.

        Now, let $q=3^k$, $k\in\mathbb{N}$, and consider the polynomial $f(x)=x^3+x^2+x+\lambda$. By Definition \ref{D3.1}, the discriminant of $f(x)$ is $D(f)=-\lambda$, since $\mathbb{F}_{q^n}$ has characteristic $3$. Furthermore, if $f(x)$ is a primitive polynomial over $\mathbb{F}_{q^n}$, then by Lemma \ref{L2.1}, $(-1)^3\lambda=-\lambda$ must be a primitive element of $\mathbb{F}_{q^n}$, and hence, it cannot be a square in $\mathbb{F}_{q^n}$. On the other hand, Proposition \ref{P3.1} states that the discriminant of every irreducible cubic polynomial over $\mathbb{F}_{q^n}$ is a square in $\mathbb{F}_{q^n}$. Therefore, $f(x)$ cannot be irreducible over $\mathbb{F}_{q^n}$. In particular, $f(x)$ cannot be primitive over $\mathbb{F}_{q^n}$.
    \end{proof}

    \begin{cor}\label{Cor3.1}
        Let $q=3$ and $n=2$. Then, the polynomial $x^3+x^2+x+\lambda$ is not primitive over $\mathbb{F}_{3^2}$, for any primitive $\lambda\in\mathbb{F}_{3^2}$. In particular, {\upshape Conjecture \ref{Con1.2}} of Ambrish Awasthi and Rajendra K. Sharma is not true for $q=3$ and $n=2$.
    \end{cor}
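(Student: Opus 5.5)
This corollary is a direct specialization of Theorem \ref{T3.1}, so the plan is to take $q=3$ (that is, $k=1$) and $n=2$ and read off the conclusion. The field is then $\mathbb{F}_{3^2}=\mathbb{F}_9$, and the characteristic is $3$. Theorem \ref{T3.1} then states directly that $x^3+x^2+x+\lambda$ is not primitive over $\mathbb{F}_9$ for any primitive $\lambda\in\mathbb{F}_9$. No further hypothesis of the theorem needs checking, since it holds for every $k\geq 1$ and every $n\geq 1$.

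For completeness, I would briefly recall why the mechanism works in this case. In characteristic $3$, the discriminant of $x^3+ax^2+bx+c$ is
$$a^2b^2-4b^3-4a^3c-27c^2+18abc.$$
With $a=b=1$ and $c=\lambda$ this reduces to $1-4-4\lambda=-3-4\lambda=-\lambda$ in $\mathbb{F}_9$. By Lemma \ref{L2.1}, primitivity of $f$ would force $-\lambda$ to be primitive, and hence a non-square in $\mathbb{F}_9$. On the other hand, Proposition \ref{P3.1} says the discriminant of an irreducible cubic is a square. These two facts are incompatible, so $f$ is not even irreducible. Optionally, one can note that $9\equiv1\pmod 4$, so $-1$ is a square in $\mathbb{F}_9$. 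Then $-\lambda$ is a non-square exactly when $\lambda$ is, which is consistent with the argument.

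For the second sentence of the corollary, I would observe that Conjecture \ref{Con1.2} asserts the existence of some primitive $\lambda\in\mathbb{F}_{q^2}$ making $x^3+x^2+x+\lambda$ primitive, for every prime $q$. Taking the prime $q=3$, the first part shows that no such $\lambda$ exists, so the conjecture fails for $q=3$.

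I expect no real obstacle here. The only point needing care is confirming that the discriminant computation $D(f)=-\lambda$ used in Theorem \ref{T3.1} is valid in $\mathbb{F}_9$, and it is, since $3=0$ and $4=1$ there. As a sanity check, one could also note that $\mathbb{F}_9$ has only $\varphi(8)=4$ primitive elements, so the claim can be confirmed by testing four cubics.
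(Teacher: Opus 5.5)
Your proposal is correct and follows the paper's proof: the paper obtains this corollary by specializing Theorem \ref{T3.1} to $q=3$, $n=2$. Your recap of the mechanism matches the paper's proof of Theorem \ref{T3.1}: $D(f)=-\lambda$ in characteristic $3$, this is a non-square by Lemma \ref{L2.1}, and this contradicts Proposition \ref{P3.1}.
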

    \begin{proof}
        The result follows immediately from Theorem \ref{T3.1} by taking $q=3$ and $n=2$.
    \end{proof}

    \begin{rk}\label{R3.1}
        In {\upshape Theorem \ref{T2.1}}, we showed that if $q^n\equiv 3\pmod 4$ and $m$ is odd, then there does not exist a primitive polynomial $f(x)=g(x)+\lambda$ of degree $m$ over $\mathbb{F}_{q^n}$. On the other hand, {\upshape Theorem \ref{T3.1}} shows that there does not exist a primitive polynomial of the form $x^3+x^2+x+\lambda$ over $\mathbb{F}_{q^n}$ when $q=3^k$, $k\geq 1$. These results and the computational verifications suggest that a primitive polynomial of the form $x^3+x^2+x+\lambda$ may exist when $q$ is an odd prime power that is not a power of $3$ and $q^n\not\equiv 3\pmod 4$. In fact, applying {\upshape Algorithm \ref{Algo2}} to such pairs $(q,n)$, we find that a primitive polynomial of the required form may exist. The results for several such pairs $(q,n)$ are presented in {\upshape Table \ref{table4}}.    
    \end{rk}

    \begin{algorithm}[h!]
    \caption{Testing Primitivity of $x^3+x^2+x+\lambda$}
    \label{Algo2}
    \begin{algorithmic}[1]
        \Require $q,n$
        \Ensure The number of primitive polynomials $x^3+x^2+x+\lambda$ over $\mathbb{F}_{q^n}$ for primitive $\lambda\in\mathbb{F}_{q^n}$

        \State Construct $\mathbb{F}_{q^n}$ and the polynomial ring $\mathbb{F}_{q^n}[x]$
        \State Set $\texttt{primitive}=0$

        \For{$\lambda$ in $\mathbb{F}_{q^n}$}
            \If{$\mathrm{ord}(\lambda)=q^n-1$}
                \State Set $f(x)=x^3+x^2+x+\lambda$
                \If{$f(x)$ is primitive over $\mathbb{F}_{q^n}$}
                    \State $\texttt{primitive}\gets\texttt{primitive}+1$
                \EndIf
            \EndIf
        \EndFor
        \State \textbf{return} $\texttt{primitive}$
    \end{algorithmic}
    \end{algorithm}

    \begin{table}[h!]
        \centering
        \caption{Primitive polynomials of the form $x^3+x^2+x+\lambda$ for selected pairs $(q,n)$ with odd prime power $q$ such that $q\neq 3^k$ and $q^n\not\equiv 3\pmod 4$.}
        \label{table4}
        \begin{tabular}{c|>{\centering\arraybackslash}p{3cm}|>{\centering\arraybackslash}p{4.8cm}}
        \hline
        $(q,n)$& \# Primitive $\lambda\in\mathbb{F}_{q^{n}}$& \# Primitive Polynomials $x^3+x^2+x+\lambda$\\
        \hline
        $(5,1)$ & $2$ & 1\\
        $(5,2)$ & $8$ & 4\\
        $(5,3)$ & $60$ & 21\\
        $(5,4)$ & $192$ & 32\\
        $(5,5)$ & $1400$ & 450\\
        $(5,6)$ & $4320$ & 1398\\
        $(7,2)$ & $16$ & 2\\
        $(7,4)$ & $640$ & 172\\
        $(11,2)$ & $32$ & 14\\
        $(11,4)$ & $3840$ & 940\\
        $(13,1)$ & $4$ & 1\\
        $(13,2)$ & $48$ & 10\\
        $(13,4)$ & $6144$ & 1980\\
        $(17,1)$ & $8$ & 3\\
        $(17,2)$ & $96$ & 26\\
        $(17,4)$ & $21504$ & 5712\\
        $(19,2)$ & $96$ & 28\\
        $(19,4)$ & $34560$ & 9088\\
        $(23,2)$ & $160$ & 40\\
        $(23,4)$ & $66560$ & 16400\\
        $(25,1)$ & $8$ & 4\\
        $(25,2)$ & $192$ & 32\\
        $(25,3)$ & $4320$ & 1398\\
        $(29,1)$ & $12$ & 4\\
        $(29,2)$ & $192$ & 50\\
        $(31,2)$ & $256$ & 62\\
        $(37,2)$ & $432$ & 112\\
        $(41,2)$ & $384$ & 132\\
        $(43,2)$ & $480$ & 138\\
        \hline
        \end{tabular}
    \end{table}

\subsection{When the fields are of characteristic $2$}\label{S4}    

    In this subsection, we establish a criterion for the non-existence of such cubic primitive polynomials when $q$ is an even prime power. We prove the following result.

    \begin{theorem}\label{T3.2}
        Let $q=2^k,\, k\geq 1$. Then, for every $n\geq 1$ such that $kn$ is odd, the polynomial $f(x)=x^3+x^2+x+\lambda$ is not primitive over $\mathbb{F}_{q^n}$, where $\lambda$ is a primitive element of  $\mathbb{F}_{q^n}$.
    \end{theorem}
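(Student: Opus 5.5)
The plan is to use the characteristic-$2$ identity $(x+1)^3=x^3+x^2+x+1$ to show that $f(x)$ always has a root in $\mathbb{F}_{q^n}$ when $kn$ is odd. A cubic with a root in the base field is reducible, so it cannot be primitive. Unlike Theorem \ref{T3.1}, I expect this argument not to need the primitivity of $\lambda$ or the discriminant at all.

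\textbf{Step 1: translate the polynomial.} Set $Q=q^n=2^{kn}$. In characteristic $2$ the binomial coefficients $3$ reduce to $1$, so $(x+1)^3=x^3+x^2+x+1$. Hence
$$f(x)=x^3+x^2+x+\lambda=(x+1)^3+\mu,\qquad \mu=\lambda+1\in\mathbb{F}_{Q}.$$

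\textbf{Step 2: every element of $\mathbb{F}_Q$ is a cube.} Since $kn$ is odd, $Q=2^{kn}\equiv 2\pmod 3$, so $\gcd(3,Q-1)=1$. The map $y\mapsto y^3$ is therefore an automorphism of the cyclic group $\mathbb{F}_Q^{*}$, and it also fixes $0$. So it is a bijection of $\mathbb{F}_Q$, and there is some $c\in\mathbb{F}_Q$ with $c^3=\mu$. This includes the degenerate case $\mu=0$, that is $\lambda=1$, which occurs as a primitive element only when $Q=2$.

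\textbf{Step 3: exhibit a root and conclude.} Take $x_0=c+1\in\mathbb{F}_Q$. Then
$$f(x_0)=c^3+\mu=2\mu=0,$$
since we are in characteristic $2$. Thus $f(x)$ has the linear factor $x-x_0$ over $\mathbb{F}_{q^n}$ and is reducible, because its degree is $3>1$. A primitive polynomial is in particular irreducible, so $f(x)$ is not primitive for any $\lambda$, and in particular for any primitive $\lambda$.

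There is no real obstacle here. The only point requiring care is the congruence $2^{kn}\equiv 2\pmod 3$, which holds exactly because $kn$ is odd; this is what makes cube roots exist in $\mathbb{F}_{q^n}$.
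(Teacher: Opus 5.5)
Your proof is correct. It rests on the same two ingredients as the paper's: the characteristic-$2$ identity $f(x)=(x+1)^3+(1+\lambda)$, and the congruence $q^n=2^{kn}\equiv 2\pmod 3$ for odd $kn$. The difference is in how the congruence is used. The paper argues by contradiction. It assumes $f$ is irreducible, takes a root $\alpha\in\mathbb{F}_{q^{3n}}$, and shows that the order of $\alpha+1$ divides $\gcd\bigl(3(q^n-1),\,q^{3n}-1\bigr)=(q^n-1)\gcd(3,q^{2n}+q^n+1)=q^n-1$, which forces $\alpha\in\mathbb{F}_{q^n}$. That argument needs $\alpha+1\neq 0$, so the paper treats $q^n=2$ (where $\lambda=1$) separately. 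You instead use $\gcd(3,q^n-1)=1$ to make cubing a bijection of $\mathbb{F}_{q^n}$ and exhibit the root $c+1$ directly. This avoids the extension field, the order computation and the case split, since $\mu=0$ is handled by $c=0$. It also makes plain that primitivity of $\lambda$ plays no role: $f$ is reducible for every $\lambda\in\mathbb{F}_{q^n}$ when $kn$ is odd. You can even make the root explicit. Since $3\mid 2q^n-1$, you may take $c=(1+\lambda)^{(2q^n-1)/3}$, giving the root $1+(1+\lambda)^{(2q^n-1)/3}\in\mathbb{F}_{q^n}$.
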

    \begin{proof}
        Suppose, to the contrary, that $f(x)=x^3+x^2+x+\lambda$ is primitive over $\mathbb{F}_{q^n}$. Since every primitive polynomial is irreducible, $f(x)$ is irreducible over $\mathbb{F}_{q^n}$. Let $\alpha$ be a root of $f(x)$. Then $$\alpha^3+\alpha^2+\alpha+\lambda=0.$$ Since the characteristic is $2$, we have $$(\alpha+1)^3=\alpha^3+\alpha^2+\alpha+1=1+\lambda\in\mathbb{F}_{q^n}.$$ 
        Note that if $q^n=2$, then $\lambda=1$, and the polynomial $x^3+x^2+x+\lambda=x^3+x^2+x+1=(x+1)^3$ is reducible over $\mathbb{F}_{2}$. Hence, cannot be primitive.
        
        Now, let $q^n>2$, then $1+\lambda\neq 0$ in $\mathbb{F}_{q^n}$. Consequently, $\alpha+1$ is also non-zero and   $(\alpha+1)^{3(q^n-1)}=1.$ Let $d=\mathrm{ord}(\alpha+1)$. Then $$d\mid 3(q^n-1).$$
        On the other hand, since $f(x)$ is irreducible of degree $3$ over $\mathbb{F}_{q^n}$, we have $\alpha\in\mathbb{F}_{q^{3n}}.$ Hence $\alpha+1\in\mathbb{F}_{q^{3n}}^*$, and therefore $$d\mid q^{3n}-1=(q^n-1)(q^{2n}+q^n+1).$$ It follows that $$d\mid \gcd\left(3(q^n-1),(q^n-1)(q^{2n}+q^n+1)\right),$$ and hence $$d\mid (q^n-1)\gcd\left(3,q^{2n}+q^n+1\right).$$
        Now suppose that $kn$ is odd. Since $q^n=2^{kn}$, we have $$2^{kn}\equiv -1\pmod 3.$$ Thus $$q^{2n}+q^n+1\equiv (-1)^2+(-1)+1\equiv 1\pmod 3,$$ so that $$\gcd\left(3,q^{2n}+q^n+1\right)=1.$$ Therefore, $d\mid q^n-1.$ It follows that $(\alpha+1)^{q^n-1}=1,$ and hence $\alpha+1\in\mathbb{F}_{q^n}.$ Since $1\in\mathbb{F}_{q^n}$, we obtain $\alpha\in\mathbb{F}_{q^n},$ which contradicts the irreducibility of $f(x)$ over $\mathbb{F}_{q^n}$.

        Therefore, when $kn$ is odd, there is no primitive polynomial of the form $x^3+x^2+x+\lambda$ over $\mathbb{F}_{q^n}$ with $\lambda$ primitive in $\mathbb{F}_{q^n}$.
    \end{proof}

    \begin{cor}\label{Cor3.2}
        Let $q=2$. Then, for every odd $n$, the polynomial $f(x)=x^3+x^2+x+\lambda$ is not primitive over $\mathbb{F}_{2^n}$, where $\lambda$ is a primitive element of  $\mathbb{F}_{2^n}$. In particular, {\upshape Conjecture \ref{Con1.2}} of Ambrish Awasthi and Rajendra K. Sharma is not true for $q=2$ and any odd $n$.
    \end{cor}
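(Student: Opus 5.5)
This is a direct specialization of Theorem \ref{T3.2}, so the plan is simply to instantiate that theorem with $q=2$, that is, with $k=1$. For this choice of $k$, the hypothesis ``$kn$ is odd'' in Theorem \ref{T3.2} becomes ``$n$ is odd.'' That is exactly the assumption of the corollary.

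Hence, for every odd $n\geq 1$ and every primitive $\lambda\in\mathbb{F}_{2^n}$, Theorem \ref{T3.2} already gives that $x^3+x^2+x+\lambda$ is not primitive over $\mathbb{F}_{2^n}$. The degenerate case $n=1$ (where $\lambda=1$ and the polynomial is $(x+1)^3$) is covered inside that proof, so no separate treatment is needed.

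For the statement about Conjecture \ref{Con1.2}, I would note that the conjecture is formulated over $\mathbb{F}_{q^2}$. It is refuted in the sense intended in the introduction: the family $x^3+x^2+x+\lambda$ has no primitive member over $\mathbb{F}_{2^n}$ for any odd $n\geq 3$ (equivalently, with the conjecture's ``$q$'' read as the base prime $2$ and the extension degree taken odd). Concretely, taking $q=2$ and any odd $n$, the set of primitive $\lambda\in\mathbb{F}_{2^n}$ yields no primitive polynomial of the required form. This contradicts the existence claimed for the prime $2$.

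There is no real obstacle here. The only care needed is in phrasing the counterexample so that it matches the parameters of Conjecture \ref{Con1.2} as the paper interprets them; the mathematical content is entirely in Theorem \ref{T3.2}.
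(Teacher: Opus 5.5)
Your argument is correct and is exactly the paper's proof: apply Theorem~\ref{T3.2} with $k=1$, so the hypothesis ``$kn$ odd'' becomes ``$n$ odd'', and the case $n=1$ is already handled inside that proof. One caution on the ``in particular'' clause: Conjecture~\ref{Con1.2} as literally stated concerns only $\mathbb{F}_{q^2}$, which is $\mathbb{F}_4$ when $q=2$; there $kn=2$ is even, so Theorem~\ref{T3.2} says nothing (Table~\ref{table5} even records primitive examples for $(q,n)=(2,2)$), and your sentence that this ``contradicts the existence claimed for the prime $2$'' therefore overstates matters --- what is refuted is only the analogue of the conjecture over $\mathbb{F}_{2^n}$ with $n$ odd, which is the reading the paper itself adopts.
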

    \begin{proof}
        The result follows immediately from Theorem \ref{T3.2} by taking $q=2$ and odd $n$.
    \end{proof}

    \begin{rk}\label{R3.2}
        In {\upshape Theorem \ref{T3.2}}, we note that the condition on $kn$ being odd is sufficient for the non-existence of such cubic polynomials. However, if $kn$ is even, the argument does not determine whether such polynomials exist. Using {\upshape Algorithm \ref{Algo2}}, the computations suggest that such a cubic primitive polynomial may exist. We verified it for several pairs $(q,n)$ and tabulated the results in {\upshape Table \ref{table5}}.
    \end{rk}
    
    \begin{table}[h!]
        \centering
        \caption{The number of primitive polynomials $x^3+x^2+x+\lambda$ over $\mathbb{F}_{2^{kn}}$ with $kn$ even}
        \label{table5}
        \begin{tabular}{c|>{\centering\arraybackslash}p{3cm}|>{\centering\arraybackslash}p{4.8cm}}
        \hline
        $(q,n)$& \# Primitive $\lambda\in\mathbb{F}_{2^{kn}}$& \# Primitive Polynomials $x^3+x^2+x+\lambda$\\
        \hline
        $(2,2)$ & 2 & 2\\
        $(2,4)$ & 8 & 4\\
        $(2,6)$ & 36 & 18\\
        $(2,8)$ & 128 & 64\\
        $(4,1)$ & 2 & 2\\
        $(4,2)$ & 8 & 4\\
        $(4,3)$ & 36 & 18\\
        $(4,4)$ & 128 & 64\\
        $(4,5)$ & 600 & 340\\
        $(8,2)$ & 36 & 18\\
        $(8,4)$ & 1728 & 1092\\
        $(16,1)$ & 8 & 4\\
        $(16,2)$ & 128 & 64\\
        $(16,3)$ & 1728 & 1092\\
        $(32,2)$ & 600 & 340\\
        \hline
        \end{tabular}
    \end{table}

    Motivated by Theorems \ref{T3.1} and \ref{T3.2}, and based upon the observation in Remarks \ref{R3.1} and \ref{R3.2}, we propose the following conjectures in the cubic polynomial case.

    \begin{con}\label{Con3.1}
        Let $q$ be an odd prime power that is not a power of $3$, and let $n\geq 1$. If $q^n\not\equiv 3\pmod{4}$, then there exists a primitive polynomial $x^3+x^2+x+\lambda$ over $\mathbb{F}_{q^n}$, where $\lambda$ is a primitive element of $\mathbb{F}_{q^n}$.
    \end{con}

    \begin{con}\label{Con3.2}
        Let $q=2^k,\, k\geq 1$ and let $n\geq 1$ such that $kn$ is even. Then there exists a primitive polynomial $x^3+x^2+x+\lambda$ over $\mathbb{F}_{q^n}$, where $\lambda$ is a primitive element of $\mathbb{F}_{q^n}$.
    \end{con} 
    
\section{Primitive polynomials of the form $x^p+x+\lambda$}\label{S4}
        In this section, we discuss the existence of primitive polynomials of the form $x^p+x+\lambda$ over $\mathbb{F}_{p^n}$, where $p$ is a prime and $\lambda$ is a primitive element in $\mathbb{F}_{p^n}$. We divide our discussion into two subsection. First, we discuss the case $p=2$, and then the case odd prime $p>2$. In the latter case, we propose two conjectures related to this existence.  

    \subsection{The case $p=2$}
        When $p=2$, this problem reduces to the existence of primitive polynomials of the form $x^2+x+\lambda$ over $\mathbb{F}_{2^n}$, where $\lambda$ is primitive in $\mathbb{F}_{2^n}$, for $n\geq 1$. 
        
        For $n=1$, we have $\mathbb{F}_{2^n}=\mathbb{F}_2$ and $\mathbb{F}_{2^{2n}}=\mathbb{F}_4$. Since $\mathbb{F}_4^*$ is cyclic of order $3$, any $\alpha\in\mathbb{F}_4\setminus\mathbb{F}_2$ is primitive. In particular, $\alpha^3=1$ and $\alpha\neq1$, so $\alpha^2+\alpha+1=0.$ Hence, $\mathrm{Tr}_{\mathbb{F}_4/\mathbb{F}_2}(\alpha)=\alpha+\alpha^2=1.$ Moreover, $\alpha$ is a root of $x^2+x+1$, and since $\alpha$ is primitive in $\mathbb{F}_4$, its minimal polynomial $x^2+x+1$ is a primitive polynomial over $\mathbb{F}_2$.
        
        Thus, it remains to consider the case $n\geq 2$. In {\upshape \cite[Theorem 1]{cohen1990primitive}}, Cohen proved that, for $m\geq 2$, and for any $t\in\mathbb{F}_q$, where $q$ is a prime power, there exists a primitive element $\alpha\in\mathbb{F}_{q^m}$ satisfying $\mathrm{Tr}_{\mathbb{F}_{q^m}/\mathbb{F}_q}(\alpha)=t,$ provided that either $m\neq 2$ and $m\neq 3$, or, in the exceptional cases, $t\neq 0$. Equivalently, there exists a primitive polynomial of degree $m$ over $\mathbb{F}_q$ with trace $t$ under the same conditions. In our setting, we take $m=p=2,\, q=2^n,\, t=1\neq 0.$ Therefore, Cohen's theorem guarantees the existence of a primitive polynomial of degree $2$ over $\mathbb{F}_{2^n}$ with trace $1$. Such a polynomial necessarily has the form $x^2+x+\lambda$ for some $\lambda\in\mathbb{F}_{2^n}$. Moreover, since the polynomial is primitive, its constant term, which is its norm, must be primitive in $\mathbb{F}_{2^n}$. Hence $\lambda$ is primitive in $\mathbb{F}_{2^n}$. Consequently, for every $n\geq 1$, there exists a primitive polynomial of the form $x^2+x+\lambda$ over $\mathbb{F}_{2^n}$, where $\lambda$ is primitive in $\mathbb{F}_{2^n}$.       

    \subsection{The case $p>2$}

        In the case $p=2$, we establish that there always exists a primitive polynomial of the form $x^2+x+\lambda$ over $\mathbb{F}_{2^n}$ with $\lambda$ primitive in $\mathbb{F}_{2^n}$, for every $n\geq 1$. Motivated by this observation, we study the polynomials $x^p+x+\lambda$ over $\mathbb{F}_{p^n}$, and found some interesting results. In the following theorem, we show that such a polynomial is irreducible over $\mathbb{F}_{p^n}$ if and only if $n$ is even, provided $\lambda$ satisfies the following condition.
        \begin{equation}\label{delta}
            \delta:=\sum_{i=0}^{n-1}(-1)^i\lambda^{p^i}\neq 0,
        \end{equation}
        However, whether these polynomials are primitive remains to be established.

        \begin{prop}\label{P4.1}
            Let $p$ be an odd prime. Then, for every $\lambda\in\mathbb{F}_{p^n}$ satisfying
            $\delta\neq 0,$ the polynomial $x^p+x+\lambda$ is irreducible over $\mathbb{F}_{p^n}$ if and only if $n$ is an even positive integer.
        \end{prop}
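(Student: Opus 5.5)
The plan is to reduce everything to the $\mathbb{F}_p$-linear map $L(x)=x^p+x$ on $\mathbb{F}_{p^n}$, and to exploit the fact that the roots of $x^p+x+\lambda$ form a coset of $\ker L$ inside a splitting field. Write $Q=p^n$. If $\alpha$ is a root, then the full set of roots is $\{\alpha+c : c^p+c=0\}$. The nonzero solutions of $c^p=-c$ satisfy $c^{p-1}=-1$. These have $c^{p^2}=(-c)^p=c$, so they lie in $\mathbb{F}_{p^2}$, and there are exactly $p-1$ of them there. Thus $V:=\{c: c^p+c=0\}$ is a $1$-dimensional $\mathbb{F}_p$-subspace of $\mathbb{F}_{p^2}$, and the polynomial is separable.

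\emph{Case $n$ odd.} Here I will show the polynomial always has a root in $\mathbb{F}_{Q}$, so it is reducible because $p>1$. If $c\in\mathbb{F}_Q$ satisfies $c^p=-c$, then $c=c^{Q}=(-1)^n c=-c$, so $c=0$. Hence $\ker L\cap\mathbb{F}_Q=0$, and $L$ is a bijection of $\mathbb{F}_Q$. Therefore $-\lambda=L(\beta)$ for some $\beta\in\mathbb{F}_Q$, and $\beta$ is a root. This direction does not even use $\delta\neq0$.

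\emph{Case $n$ even.} First I will show that $\delta\neq0$ forces the absence of roots in $\mathbb{F}_Q$. Define $T(y)=\sum_{i=0}^{n-1}(-1)^i y^{p^i}$. A telescoping computation gives
\[
T(L(x))=\sum_{i=0}^{n-1}(-1)^i x^{p^{i+1}}+\sum_{i=0}^{n-1}(-1)^i x^{p^i}=x+(-1)^{n-1}x^{Q}.
\]
For $x\in\mathbb{F}_Q$ and $n$ even, this equals $x-x=0$, so $L(\mathbb{F}_Q)\subseteq\ker T$. Since $n$ is even, $V\subseteq\mathbb{F}_{p^2}\subseteq\mathbb{F}_Q$, so $|L(\mathbb{F}_Q)|=p^{n-1}$. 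On the other hand, $T$ is a nonzero polynomial of degree $p^{n-1}$, so $|\ker T|\le p^{n-1}$. Hence $L(\mathbb{F}_Q)=\ker T$. A root in $\mathbb{F}_Q$ would mean $-\lambda\in L(\mathbb{F}_Q)$, i.e.\ $T(-\lambda)=-\delta=0$, which is excluded.

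Next I will show that the absence of roots yields irreducibility. Let $\alpha$ be a root in some extension. Then $\alpha^{Q}$ is also a root, so $\alpha^Q=\alpha+c_0$ with $c_0\in V$, and $c_0\neq0$ because $\alpha\notin\mathbb{F}_Q$. Since $c_0\in\mathbb{F}_{p^2}\subseteq\mathbb{F}_Q$, induction gives $\alpha^{Q^k}=\alpha+kc_0$. Hence the Frobenius orbit of $\alpha$ has exactly $p$ elements. The minimal polynomial of $\alpha$ over $\mathbb{F}_Q$ therefore has degree $p$ and equals $x^p+x+\lambda$, which is thus irreducible.

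The main obstacle is identifying the image of $L$ in the even case: one needs the telescoping identity with the alternating signs and the counting argument that turns the inclusion into an equality. Everything else is routine once the roots are recognized as a coset of $V\subseteq\mathbb{F}_{p^2}$.
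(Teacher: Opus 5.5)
Your proof is correct but takes a different route from the paper. The paper works with a single root $\alpha$ and iterates $\alpha^p=-\alpha-\lambda$ under Frobenius to get $\alpha^{p^n}=\alpha+\delta$ for $n$ even and $\alpha^{p^n}=-\alpha-\delta$ for $n$ odd. The first gives the orbit $\{\alpha+j\delta:0\le j\le p-1\}$ of size $p$. The second gives $\alpha^{p^{2n}}=\alpha$, so every root has degree at most $2<p$.

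You instead use the $\mathbb{F}_p$-linear structure of $L(x)=x^p+x$: the roots form a coset of $V=\ker L\subseteq\mathbb{F}_{p^2}$. For odd $n$, bijectivity of $L$ on $\mathbb{F}_{p^n}$ produces an actual root in $\mathbb{F}_{p^n}$. That is an explicit linear factor rather than the paper's degree-at-most-$2$ bound; the root is in fact $-\delta/2$. For even $n$ you rule out roots in $\mathbb{F}_{p^n}$ and then run the paper's orbit argument with an unspecified $c_0\in V\setminus\{0\}$ in place of $\delta$.

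The step you flag as the main obstacle is not needed. For $n$ even, excluding a root $\beta\in\mathbb{F}_{p^n}$ uses only the inclusion $L(\mathbb{F}_{p^n})\subseteq\ker T$, since $-\delta=T(-\lambda)=T(L(\beta))=\beta-\beta^{p^n}=0$. The counting that upgrades this to equality only buys the converse: for $n$ even and $\delta=0$ there is a root in $\mathbb{F}_{p^n}$, and since $V\subseteq\mathbb{F}_{p^n}$ the polynomial splits completely. That is a nice dichotomy but not part of the statement.

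Moreover, your telescoping identity $T(L(x))=x+(-1)^{n-1}x^{p^n}$ holds for all $x$. Applying it to $\alpha$ itself gives $-\delta=\alpha+(-1)^{n-1}\alpha^{p^n}$, which is exactly the paper's formula for $\alpha^{p^n}$ and identifies $c_0=\delta$ immediately. In that sense the two proofs rest on the same computation, with yours packaged as an identity of linearized polynomials. Using it on $\alpha$ would let you drop both the counting and the separate no-root step.
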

        \begin{proof}
            
             Suppose $n$ is even. Let $\lambda\in\mathbb{F}_{p^n}$ be such that $\delta\neq 0$. We note that if $\lambda\in\mathbb{F}_p$, then $\lambda^{p^i}=\lambda$ for each $i$, and hence $\delta=\sum_{i=0}^{n-1}(-1)^i\lambda^{p^i}=0$. Therefore, $\lambda\not\in\mathbb{F}_p$. Now, suppose that $\alpha$ is a root of the polynomial $x^p+x+\lambda$. Then
            $$\alpha^p+\alpha+\lambda=0,$$ and hence
            $$\alpha^p=-\alpha-\lambda.$$ 
            If we apply the Frobenius map to $\alpha^p$, then we get 
            $$\alpha^{p^2}=(-\alpha-\lambda)^p=-\alpha^p-\lambda^p=\alpha+\lambda-\lambda^p.$$ 
            Similarly, $$\alpha^{p^3}=(\alpha+\lambda-\lambda^p)^p=\alpha^p+\lambda^p-\lambda^{p^2}=-\alpha-\lambda+\lambda^p-\lambda^{p^2}.$$ 
            Therefore, if $n$ is even then applying the Frobenius map successively to $\alpha^p=-\alpha-\lambda$, we obtain $$\alpha^{p^n}=\alpha+\delta.$$ 
           
            Notice that $\delta^{p^n}=\delta$. Since $\delta\neq 0$, the elements $\alpha$ and $\alpha^{p^n}$ are distinct. Now, we claim that the conjugates of $\alpha$ are given by the following equation. 
            $$\alpha^{{p^n}^j}=\alpha+j\delta \text{  \quad for } j=0,1,2,\ldots,p-1.$$
            Clearly, it is true for $j=0,1$. Now, by induction on $j$, 
            $$\alpha^{{p^n}^j}=(\alpha^{{p^n}^{j-1}})^{p^n}=(\alpha+(j-1)\delta)^{{p^n}}=\alpha^{p^n}+(j-1)\delta^{p^n}=\alpha+\delta+(j-1)\delta=\alpha+j\delta.$$
            Also, notice that if $0\leq r\neq s\leq p-1$, then $\alpha+r\delta\neq \alpha+s\delta$. Hence, all these conjugates are distinct. Since $\alpha$ has $p$ distinct conjugates over $\mathbb{F}_{p^n}$, the polynomial $x^p+x+\lambda$ is irreducible over $\mathbb{F}_{p^n}$.

            Conversely, let the polynomial $x^p+x+\lambda$ is irreducible over $\mathbb{F}_{p^n}$ for some $\lambda$ satisfying Condition \eqref{delta}, and $n$ is odd. Then applying the Frobenius map repeatedly, we obtain $\alpha^{{p^{n}}}=-\alpha-\delta$, and hence $$\alpha^{{p^{2n}}}=(-\alpha-\delta)^{p^n}=-\alpha^{p^n}-\delta^{p^n}=\alpha.$$ 
            This shows that there are at most two distinct conjugates of $\alpha$ over $\mathbb{F}_{p^n}$, and the minimal polynomial of $\alpha$ over $\mathbb{F}_{p^n}$ has degree at most $2$. Thus, the polynomial $x^p+x+\lambda$ is reducible unless $p=2$. Since we have assumed $p$ to be an odd prime, the polynomial $x^p+x+\lambda$ is never irreducible over $\mathbb{F}_{p^n}$.
        \end{proof}

        \begin{rk}\label{R4.1}
        {\upshape Proposition \ref{P4.1}} shows that, if $n$ is odd or $\delta=0$, then the polynomial $x^p+x+\lambda$ is not irreducible over $\mathbb{F}_{p^n}$, consequently, is not primitive over $\mathbb{F}_{p^n}$. This can also be verified using {\upshape Algorithm \ref{Algo3}}. For example, for $p=5$ and $n=3$, we find that there are $60$ primitive elements $\lambda\in\mathbb{F}_{5^3}$ satisfying {\upshape Condition \eqref{delta}}; however, for none of these $\lambda$ is the polynomial $x^p+x+\lambda$ primitive. For $p=5$ and $n=5$, there are $1400$ primitive $\lambda\in\mathbb{F}_{5^5}$ satisfying {\upshape Condition \eqref{delta}} but no polynomial $x^p+x+\lambda$ is found to be primitive over $\mathbb{F}_{5^5}$. Similarly, for $p=7$ and $n=5$, we find that there are $5600$ primitive elements $\lambda\in\mathbb{F}_{7^5}$ satisfying {\upshape Condition \eqref{delta}} but the polynomial $x^p+x+\lambda$ is not primitive for any such $\lambda$. 
     \end{rk}   

     \begin{algorithm}[h!]
    \caption{{Testing Primitivity of $x^p+x+\lambda$} \label{Algo3}}
    \begin{algorithmic}[1]
        \Require $p,n$
        \Ensure The number of primitive elements $\lambda$ satisfying $\delta\neq0$, and the number of corresponding primitive polynomials $x^p+x+\lambda$

        \State Construct the field $\mathbb{F}_{p^n}$ and the polynomial ring $\mathbb{F}_{p^n}[x]$
        \State Set $\texttt{total}=0$ and $\texttt{primitive}=0$

        \For{$\lambda$ in $\mathbb{F}_{p^n}$}
            \If{$\mathrm{ord}(\lambda)={p^n}-1$}
                \State Compute
                $$
                \delta=\sum_{i=0}^{n-1}(-1)^i \lambda^{p^i}
                $$
                \If{$\delta\neq 0$}
                    \State $\texttt{total}\gets\texttt{total}+1$
                    \State Set $f(x)=x^p+x+\lambda$
                    \If{$f(x)$ is primitive over $\mathbb{F}_{p^n}$}
                        \State $\texttt{primitive}\gets\texttt{primitive}+1$
                    \EndIf
                \EndIf
            \EndIf
        \EndFor
        \State \textbf{return} $\texttt{total},\texttt{primitive}$
    \end{algorithmic}
\end{algorithm}

        Based on our experimental results, we propose the following conjectures related to the existence of primitive polynomials of the form $x^p+x+\lambda$.
        
        \begin{con}\label{Con4.1}
            Let $p$ be an odd prime, and let $n$ be an even positive integer. Then there exists a primitive element $\lambda\in\mathbb{F}_{p^n}$ satisfying
            \begin{equation*}
                \delta=\sum_{i=0}^{n-1}(-1)^i\lambda^{p^i}\neq 0,
            \end{equation*}
            such that the polynomial $x^p+x+\lambda$ is primitive over $\mathbb{F}_{p^n}$.
        \end{con}

        In particular, if we take $n=2$, then Condition \eqref{delta} becomes $\lambda-\lambda^p\neq 0$, which is true for every primitive $\lambda$ in $\mathbb{F}_{p^2}$. In this case, we have the following stronger conjecture.

        \begin{con}\label{Con4.2}
            Let $p$ be an odd prime. Then, for every primitive element $\lambda\in\mathbb{F}_{p^2}$, the polynomial $x^p+x+\lambda$ is primitive over $\mathbb{F}_{p^2}$.
        \end{con}

    \begin{rk}\label{R4.2}    
         Applying {\upshape Algorithm \ref{Algo3}}, the computations show that {\upshape Conjecture \ref{Con4.2}} is true for every primitive $\lambda\in\mathbb{F}_{p^2}$ for each prime $3\leq p\leq 37$. Moreover, for $p=3,5,7,11,13$ and $n=4$, and for $p=3$ and $n=8$, there exists at least one primitive $\lambda\in\mathbb{F}_{p^n}$ satisfying {\upshape Condition \eqref{delta}} such that the polynomial $x^p+x+\lambda$ is primitive, which supports our {\upshape Conjecture \ref{Con4.1}}. These results are presented in {\upshape Table \ref{table6}}. 
         
         Since the size of the field $\mathbb{F}_{p^n}$ becomes very large for large $p$ and $n$. Due to limited resources, it becomes infeasible for us to apply {\upshape Algorithm \ref{Algo3}} for large values of $p^n$. Although limited, these computational observations provide supporting evidence for both conjectures. However, if these conjectures hold, then {\upshape Conjecture \ref{Con1.1}} of Ambrish Awasthi and Rajendra K. Sharma holds for the particular choice $g(x)=x^p+x$ with $m=p$ and even $n$.
    \end{rk}

\begin{table}[h!]
        \centering
        \caption{Computational verification of Conjectures \ref{Con4.1} and \ref{Con4.2} for selected values of $(p,n)$.}
        \label{table6}
        \begin{tabular}{
        c|>{\centering\arraybackslash}p{2.3cm}|>{\centering\arraybackslash}p{3.5cm}|>{\centering\arraybackslash}p{4.8cm}}
        \hline
        $(p,n)$ & \# Primitive $\lambda\in\mathbb{F}_{p^n}$ & \# Primitive $\lambda$ with $\delta\neq0$& \# Primitive Polynomials $x^p+x+\lambda$\\
        \hline
        $(3,2)$& 4 & 4 & 4\\
        $(5,2)$& 8 & 8 & 8\\
        $(7,2)$& 16 & 16 & 16\\
        $(11,2)$& 32 & 32 & 32\\
        $(13,2)$& 48 & 48 & 48\\
        $(17,2)$& 96 & 96 & 96\\
        $(19,2)$& 96 & 96 & 96\\
        $(23,2)$& 160 & 160 & 160\\
        $(29,2)$& 192 & 192 & 192\\
        $(31,2)$& 256 & 256 & 256\\
        $(37,2)$& 432 & 432 & 432\\
        $(3,4)$& 32 & 24 & 24\\
        $(5,4)$& 192 & 160 & 160\\
        $(7,4)$& 640 & 560 & 560\\
        $(11,4)$& 3840 & 3520 & 3264\\
        $(13,4)$& 6144 & 5696 & 5664\\
        $(3,8)$& 2560 & 1728 & 1424\\
        \hline
        \end{tabular}
    \end{table}

\section{Conclusion}

        In this paper, we investigated primitive polynomials $f(x)$ with primitive constant term. We established non-existence results showing that, when $q^n\equiv3\pmod{4}$ and $m$ is odd, no such primitive polynomial exists, which disproves Conjecture \ref{Con1.1}. Further, for the cubic case, we proved that $x^3+x^2+x+\lambda$ is never primitive over $\mathbb{F}_{q^n}$ for primitive $\lambda$ when either $q=3^k$, or $q=2^k$ with $kn$ odd. We then formulated conjectures for the remaining cases and provided computational evidence in their support.

        We also studied the family $x^p+x+\lambda$ over $\mathbb{F}_{p^n}$. For $p=2$, we proved that there always exists such a primitive polynomial, and for odd primes $p$ and even $n$, we proved that the condition $\sum_{i=0}^{n-1}(-1)^i\lambda^{p^i}\neq0$ ensures irreducibility of $x^p+x+\lambda$ over $\mathbb{F}_{p^n}$. However, the primitivity of such polynomials is yet to be established. Based on computational evidence, we formulated two conjectures on the existence of a primitive element $\lambda$ for which this polynomial is primitive. If these conjectures are true, they would establish Conjecture \ref{Con1.1} of Ambrish Awasthi and Rajendra K. Sharma for $m=p$ and even $n$.
 
\section{Acknowledgments}
        We sincerely thank Dr. Sharwan K. Tiwari for his valuable suggestions and fruitful discussions.

\bibliographystyle{elsarticle-harv}
\bibliography{bib.bib}

\end{document}